\newcommand{\bb}{\mathbb}
\newcommand{\R}{\bb R}
\newcommand{\Z}{\bb Z}
\newcommand{\ceil}[1]{\lceil#1\rceil}
\newcommand{\sm}{\setminus}
\newcommand{\Basic}{Bubble algorithm}
\def\st{\,:\,}
\newcommand{\scalar}[1]{\langle #1\rangle}
\newtheorem{prop}{Proposition}
\newtheorem{theorem}[prop]{Theorem}
\newtheorem{lemma}[prop]{Lemma}
\newtheorem{claim}[prop]{Claim}
\newtheorem{remark}[prop]{Remark}
\begin{document}

\title{A polynomial projection-type algorithm for linear programming}
\author{L\'aszl\'o A. V\'egh \quad \quad Giacomo Zambelli\\
 \multicolumn{1}{p{.7\textwidth}}{\centering\emph{Department of Management\\
London School of Economics and Political Science}\\
\texttt{\{L.Vegh,G.Zambelli\}@lse.ac.uk}}
}
\date{}

\maketitle

\section{Introduction}
In the
{\em linear programming feasibility problem} we are given a matrix
$A\in \Z^{m\times n}$ and a vector $b\in\Z^m$, and we wish to compute
a feasible solution to the system
\begin{equation}\label{eq:main problem}\begin{array}{c}
Ax=b\\
x\geq 0
\end{array}
\end{equation}
or show that none exists.
The first practical algorithm for linear programming was the {\em simplex method}, introduced by Dantzig in 1947 \cite{Dantzig};
while efficient in practice, for most known pivoting rules the method has an exponential-time worst
case complexity. Several other algorithms were developed over the subsequent
decades, such as the {\em relaxation method}   by Agmon \cite{Agmon} and Motzikin and Shoenberg \cite{Motzin-Shoenberg}.
The first polynomial-time algorithm, the {\em ellipsoid method}, was introduced
by Khachiyan   \cite{Khachiyan}, followed a few years later by Karamarkar's first {\em interior point method}
\cite{Karmarkar}. 
In 2010 Chubanov \cite{Chubanov-binary,Chubanov LP} gave a different type of  polynomial time algorithm, inspired by the relaxation method, followed recently by a substantially simpler and improved algorithm \cite{Chubanov new}. Computational experiments of Chubanov's original algorithm, as well as a different treatment, were carried out by Basu, De Loera and Junod \cite{Basu-DeLoera-Junod}.

Here we present a polynomial time algorithm based on \cite{Chubanov-binary}. The engine behind our algorithm is the \Basic{} subroutine, which can be considered as an unfolding of the recursion in the Divide-and-Conquer algorithm described in the earlier paper of Chubanov \cite{Chubanov-binary}. Our algorithm is also related to the one in \cite{Chubanov new}; in particular, our \Basic{} is an analogue of the Basic algorithm in  \cite{Chubanov new}. However, while our \Basic{} is a variant of
the relaxation method\footnote{In the title we however use ``projection''
instead of ``relaxation'', as it seems to be a more accurate description
of this type of algorithms. The papers \cite{Agmon,Motzin-Shoenberg} describe a
more general class of algorithms, where our algorithm corresponds to the special case that is called projection method
in \cite{Motzin-Shoenberg} and orthogonal projection method in \cite{Agmon}.}, Chubanov's Basic algorithm is precisely von Neumann's algorithm (see Dantzig \cite{Dantzig 92}).

The two algorithms proceed in a
somewhat different manner. Chubanov's algorithm decides whether $Ax=0$ has a strictly
positive solution, and reduces problems of the form (\ref{eq:main problem}) via an homogenization, whereas we work directly with the form (\ref{eq:main problem}). Also, the key updating step of  the bounds on the feasibility region after an iteration of
the basic subroutine and the supporting argument substantially differs from ours.
In particular, whereas \cite{Chubanov new} divides only one of the upper bounds on the variables by exactly two, our
algorithm uses simultaneous updates of multiple components.
Another difference is that instead of repeatedly changing the
original system by a rescaling, we keep the same problem setting
during the entire algorithm and modify a certain norm instead. This
enables a clean understanding of the progress made by the algorithm.

If we denote by $L$ the encoding size of the matrix $(A,b)$, our
algorithm performs $O([n^5/\log n]L)$ arithmetic
operations. Chubanov's algorithm \cite{Chubanov new} has a better running time bound of
$O(n^4 L)$; however, note that our algorithm is still a considerable
improvement over $O(n^{18+3\epsilon}L^{12+2\epsilon})$ in the previous
version \cite{Chubanov LP}. We get a better bound $O([n/\log n] L)$ on the number of executions of the basic subroutine, as compared to $O(nL)$ in \cite{Chubanov new}; on the other hand, \cite{Chubanov new} can use an argument bounding the overall number of elementary iterations of all executions of the basic subroutine, thus achieving a better running time estimation.

\subsection{The LP algorithm}

We highlight our polynomial-time algorithm to find a feasible solution of (\ref{eq:main problem}). We denote by $P$ the feasible region of (\ref{eq:main problem}). Throughout the paper we will assume that $A$ has full-row rank. \bigskip

Let $d^1,\ldots,d^m$ be the $m$ columns of $(A,b)$ with largest Euclidean norm,  and let $\Delta=\|d^1\|\cdots\|d^m\|$. It can be easily shown that $\Delta < 2^L$ (see for example \cite[Lemma 1.3.3]{glsbook}). By Hadamard's bound, for every square submatrix $B$ of $(A,b)$,  $|\det(B)|\leq \Delta$. It follows that, for every basic feasible solution $\bar  x$ of (\ref{eq:main problem}), there exists $q\in\Z$, $1\leq q\leq \Delta$, such that, for $j=1,\ldots,n$,  $\bar x_j=p_j/q$ for some integer $p_j$, $0\leq p_j\leq \Delta$. In particular, $\bar x_j\leq \Delta$ for $j=1,\ldots,n$, and $\bar x_j\geq \Delta^{-1}$ whenever $\bar x_j>0$.


The algorithm maintains a vector $u\in\R^n$, $u>0$, such that every basic feasible solution of (\ref{eq:main problem}) is contained in the hypercube $\{x\st 0\leq x\leq u\}$. At the beginning, we set $u_i:=\Delta$, $i=1,\ldots,n$.

At every iteration, either the algorithm stops with a point in $P$,  or it determines a vector $u'\in\R^n$, $0<u'\leq u$ such that every basic feasible solution of (\ref{eq:main problem}) satisfies $x\leq u'$ and such that, for some index $p\in\{1,\ldots,n\}$, $u'_p\leq u_p/2$. For $j=1,\ldots,n$, if $u'_j\leq \Delta^{-1}$  we reduce the number of variables by setting $x_j:=0$, and removing the $j$th column of the matrix $A$;
 otherwise, we update $u_j:=u'_j$.

The entire algorithm terminates either during an iteration when a
feasible solution is found, or once the system $Ax=b$ has a unique
solution or is infeasible. If the unique solution is nonnegative, then it gives a point in $P$, otherwise the problem is infeasible.

Since at every iteration there exists some variable $x_p$ such that $u_p$ is at least halved, and since $\Delta^{-1}\leq u_j\leq \Delta$ for every variable $x_j$ that has not been set to $0$, it follows that the algorithm terminates after at most $n\log(\Delta^2)\in O(nL)$ iterations.
%
The crux of the algorithm is the following theorem and the subsequent claim.
\begin{theorem} \label{thm:basic algorithm} There exists a strongly polynomial time algorithm which, given  $A\in\Z^{m\times n}$, $b\in\Z^m$, and $u\in\R^n$, $u>0$, in $O(n^4)$ arithmetic operations returns one of the following:
\begin{enumerate}
\item A feasible solution to (\ref{eq:main problem});
\item A vector $(v,w)\in\R^m\times\R^n_+$, $w\neq 0$, such that  $(v^\top A+w^\top)\bar x< v^\top b+\frac{1}{2n}w^\top u$ for every  $\bar x\in \{x\in\R^n\st 0\leq x\leq u\}$.
\end{enumerate}
\end{theorem}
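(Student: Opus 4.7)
The plan is to design an iterative projection procedure in the spirit of the relaxation method. I would first precompute, in $O(n^3)$ time, the orthogonal projector $P := I - A^\top(AA^\top)^{-1}A$ onto $\ker A$, together with a particular solution $y^0$ of $Ay = b$ (say by solving the normal equations). Alongside the iterate $y^t$, the algorithm maintains a pair $(v^t, w^t)\in\R^m\times\R^n_+$, initially $(0,0)$, which records a linear combination of the equations $Ax=b$ and the constraints $x\geq 0$ that have been used so far. At iteration $t$, if $0 \leq y^t \leq u$ then $y^t$ is feasible for (\ref{eq:main problem}) and is returned. Otherwise the algorithm picks an index $p$ where the bound is violated (for concreteness $y^t_p<0$, choosing $p$ according to a rule such as maximizing $u_p|y^t_p|$) and updates
\[
y^{t+1} := y^t - \frac{y^t_p}{(Pe^p)_p}\,Pe^p,
\]
incrementing $w_p$ and adjusting $v$ by the matching Lagrangian coefficients. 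Since $Pe^p\in\ker A$, this preserves $Ay^{t+1}=b$ while raising the chosen coordinate to zero.

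The key invariant I would maintain is of the form
\[
(v^\top A + w^\top)\bar x \;=\; v^\top b \;+\; w^\top \bar x \;-\; \bigl(\text{terms controlled by }y^t\bigr),
\qquad \forall\,\bar x\in\R^n,
\]
so that, evaluated over $\bar x\in[0,u]$, the maximum of the left-hand side is bounded by $v^\top b + w^\top y^t$ plus a controlled slack. The classical Agmon--Motzkin--Schoenberg analysis of the relaxation method shows that a potential such as $\|y^t - y^*\|^2$ (for any fixed $y^*\in P\cap[0,u]$, if one exists) decreases geometrically at every projection step. I would combine that with a counting argument showing that within $O(n^2)$ iterations either $y^t$ enters $[0,u]$ -- producing a point of $P$ -- or the pair $(v^t,w^t)$ is such that $w^\top y^t < \frac{1}{2n}w^\top u$, which together with $Ay^t=b$ lifts to the required inequality for every $\bar x\in[0,u]$. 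Each iteration costs $O(n^2)$ for the rank-one updates of $y$, $v$, and $w$, giving the stated $O(n^4)$ arithmetic bound.

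The main obstacle is obtaining the explicit constant $\frac{1}{2n}$ rather than a bound polynomially small in the encoding length $L$. A naive potential-decrease argument yields only the latter, which would be useless for the outer LP algorithm that needs a strongly polynomial guarantee. I would overcome this by two ingredients: first, a pivot rule that at each iteration selects the most "$u$-violated" coordinate so that the potential progress is measured in the same $u$-weighted geometry as the target certificate; and second, a pigeonhole step over the $n$ coordinates ensuring that after $O(n^2)$ iterations some component of $w^t$ has accumulated enough weight relative to $w^\top u$ to drive the ratio $w^\top y^t / w^\top u$ below $\frac{1}{2n}$. Converting this quantitative statement into the global bound claimed in condition (2), and reconciling it with the termination criterion and the $O(n^2)$-iteration count, is the crux of the argument.
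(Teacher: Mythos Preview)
Your proposal has a genuine gap at exactly the point you flag as ``the crux of the argument.'' The classical Agmon--Motzkin--Schoenberg potential $\|y^t-y^*\|^2$ is only defined when a feasible $y^*\in P\cap[0,u]$ exists, so it cannot drive the analysis in the case where you must produce outcome~(2). Your fallback---a pigeonhole over coordinates plus a $u$-weighted pivot rule forcing $w^\top y^t/w^\top u<\tfrac{1}{2n}$ after $O(n^2)$ steps---is not an argument: $y^t$ is moving, $w^t$ is growing in an uncontrolled way, and no invariant ties them together with the right constant. You yourself note that a naive potential gives only an $L$-dependent bound; nothing in the proposal closes that gap.

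The paper's proof supplies three concrete ingredients you are missing. First, it replaces $x\ge 0$ by the shifted system $x\ge\ell$ with $\ell:=u/(2n)$; this is precisely where the constant $\tfrac{1}{2n}$ enters, since any valid inequality $(A^\top v+w)^\top x\ge v^\top b+w^\top\ell$ for the shifted system already carries the term $\tfrac{1}{2n}w^\top u$ on the right. Second, it works in the weighted inner product $\langle x,y\rangle_D=x^\top Dy$ with $D=\mathrm{diag}(4u_i^{-2})$, so that the box $[0,u]$ sits inside the $D$-ball of radius $2\sqrt{n}$ and, crucially, any point with $z_i<0$ has $D$-distance strictly greater than $1/n$ from the hyperplane $x_i=\ell_i$. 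Third, the iterate $z$ is updated not by projecting onto a single violated hyperplane (as you do), but onto the intersection of the \emph{previous} separating halfspace $\langle z,x\rangle_D\ge\|z\|_D^2$ with $x_i\ge\ell_i$; this two-constraint projection is what guarantees the additive increase $\|z'\|_D^2\ge\|z\|_D^2+1/n^2$ regardless of feasibility. Hence after at most $4n^3$ iterations $\|z\|_D>2\sqrt n$, and the valid inequality $\langle z,x\rangle_D\ge\|z\|_D^2$ separates the entire box, yielding $(v,w)$ directly. Each iteration costs $O(n)$ (not $O(n^2)$), giving the $O(n^4)$ bound; your claimed $O(n^2)$ iteration count has no support.
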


\noindent The algorithm referred to in Theorem \ref{thm:basic algorithm} will be
called the {\em \Basic{}}, described in Section \ref{sec:basic algorithm}.

\begin{claim}\label{claim:u'} Let $u\in\R^n$, $u>0$, such that every basic feasible solution for (\ref{eq:main problem}) satisfies $x\leq u$, and let $(v,w)\in\R^m\times\R^n_+$ be a vector as in point 2 of Theorem \ref{thm:basic algorithm}.
For $j=1,\ldots,n$, let $u'_j:=\min\left\{u_j,\frac{\sum_{i=1}^n u_i w_i}{2n w_j}\right\}$. Then every basic feasible solution of (\ref{eq:main problem}) satisfies $x\leq u'$. Furthermore, if we let $p:=\arg\max_{j=1,\ldots,n}\{u_j w_j\}$, then $u'_p\leq u_p/2$.
\end{claim}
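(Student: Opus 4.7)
The plan is short: the inequality in Theorem~\ref{thm:basic algorithm}(2), when evaluated at any basic feasible solution $\bar x$, simplifies via $A\bar x=b$ to a scalar bound on $w^\top \bar x$, which then distributes across coordinates thanks to the nonnegativity of $w$ and $\bar x$.

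Concretely, I would fix an arbitrary basic feasible solution $\bar x$ of (\ref{eq:main problem}). Since $0\leq \bar x\leq u$ by the hypothesis on $u$, Theorem~\ref{thm:basic algorithm}(2) applies to $\bar x$, giving $(v^\top A + w^\top)\bar x < v^\top b + \frac{1}{2n}w^\top u$; using $A\bar x=b$ cancels the $v$-dependent terms and yields
$$ w^\top \bar x \;<\; \frac{1}{2n}\, w^\top u \;=\; \frac{1}{2n}\sum_{i=1}^n u_i w_i. $$
Because every term $w_j\bar x_j$ is nonnegative, each is bounded above by this sum, so for any coordinate with $w_j>0$ we get $\bar x_j < \frac{\sum_i u_i w_i}{2n w_j}$. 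Combined with $\bar x_j\leq u_j$, this yields $\bar x_j \leq u'_j$. For coordinates with $w_j=0$ the definition is interpreted as $u'_j=u_j$ and the inequality $\bar x_j \leq u'_j$ is immediate.

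For the second assertion, pick $p=\arg\max_j u_j w_j$; note $u_pw_p>0$ since $w\neq 0$ and $u>0$. Then $\sum_{i=1}^n u_i w_i \leq n\, u_p w_p$, so $\frac{\sum_i u_i w_i}{2n\, w_p}\leq \frac{u_p}{2}$, and taking the minimum with $u_p$ preserves this bound, giving $u'_p\leq u_p/2$.

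I do not anticipate any genuine obstacle: the entire argument is a one-line manipulation of the inequality from Theorem~\ref{thm:basic algorithm}(2) followed by a coordinate-wise distribution under nonnegativity. The only minor point worth flagging in a formal write-up is the convention $u'_j = u_j$ whenever $w_j=0$ (equivalently, treating the fraction as $+\infty$ inside the $\min$).
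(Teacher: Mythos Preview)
Your proof is correct and follows essentially the same approach as the paper: cancel the $v$-terms via $A\bar x=b$, then use nonnegativity of $w_i\bar x_i$ to bound each coordinate, and finally use that the maximum $u_jw_j$ is at least the average. The only cosmetic difference is that the paper first rescales $(v,w)$ so that $\sum_i u_iw_i=2n$, which turns $u'_j$ into $\min\{u_j,w_j^{-1}\}$ and the averaging bound into $u_pw_p\ge 2$; your unnormalized version is equally valid.
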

\begin{proof} Since $w\neq 0$, up to re-scaling $(v,w)$ we may assume that $\sum_{i=1}^n u_i w_i=2n$, therefore $u'_j =\min\{u_j,w^{-1}_j\}$, $j=1,\ldots,n$.  Since every basic feasible solution $\bar x$ for (\ref{eq:main problem}) satisfies $(v^\top A+w^\top)\bar x< v^\top b+\frac1{2n}w^\top u$, it follows that, for $j=1,\ldots,n$,
$$
0>v^\top (A\bar x-b)+\sum_{i=1}^nw_i\left (\bar x_i-\frac{u_i}{2n}\right)=\sum_{i=1}^n w_i\bar x_i -1
\geq  w_j \bar x_j -1.
$$
It follows that $\bar x_j< w_j^{-1}$,  thus $\bar x_j\leq u'_j$. Finally, by our choice of $p$,  $u_p w_p\geq 2 $, therefore $w_p^{-1}\leq {u_p}/2$.
\end{proof}

Theorem~\ref{thm:basic algorithm} and Claim~\ref{claim:u'} imply that
our algorithm runs in time $O(n^5L)$. In Section \ref{sec:running time
  refinement} we will refine our analysis and show that the number of
calls to the \Basic{} is actually  $O([n/\log n]L)$. This gives an
overall running time of $O([n^5/\log n]L)$.

\subsection{Scalar products}

We recall a few facts about scalar products that will be needed in the remainder.
Given a symmetric positive definite matrix $D$, we denote by $\scalar{x,y}_D=x^\top D y$. We let $\|\cdot\|_D$ the norm defined by $\|x\|_D=\sqrt{\scalar{x,x}_D}$, and refer to it as the {\em $D$-norm}. The {\em $D$-distance} between two points $x$ and $y$ is $\|x-y\|_D$. Given a point $c\in \R^n$ and  $r>0$, we define $B_D(c,r):=\{x\st \|x-c\|_D\leq r\}$, and refer to it as the {\em $D$-ball of radius $r$ centered at $c$}.

Given any system $Cx=d$ of inequalities, we denote $\langle Cx=d\rangle:=\{x\in \R^n\st Cx=d\}$. We recall that, given a point $\bar x$, assuming w.l.o.g. that $C$ has full row rank,  the point  $y$ in $\langle Cx=d\rangle$ at minimum $D$-distance from $\bar x$ is given by the formula
\begin{equation}\label{eq:projection}y=\bar x+D^{-1}C^\top (CD^{-1}C^\top)^{-1}(d-C\bar x),\end{equation}
and thus the $D$-distance between $\bar x$ and $\langle Cx=d\rangle$ is
\begin{equation}\label{eq:distance}\|y-\bar x\|_D=\sqrt{(d-C\bar x)^\top (CD^{-1}C^\top)^{-1}(d-C\bar x)}.\end{equation}

\begin{remark}\label{rmk:lambda}  If $y$ is the point in  $\langle Cx=d\rangle$ at minimum $D$-distance from $\bar x$, then the vector $\lambda:=(CD^{-1}C^\top)^{-1}(d-C\bar x)$ is the unique solution to  $y-\bar x=D^{-1} C^\top \lambda$ and $\|y-\bar x\|^2_D=(d-C\bar x)^\top \lambda$.\end{remark}

In particular, given $\alpha\in\R^n\sm\{ 0\}$ and $\beta\in \R$, and denoting by $y$ the point in $\scalar{\alpha^\top x=\beta}$ at minimum $D$-distance from $\bar x$, we have
\begin{equation}\label{eq:distance to hyperplane}
y=\bar x+\frac{D^{-1}\alpha (\beta -\alpha^\top \bar x)}{\alpha^\top D^{-1}\alpha}\;,\;
\|y-\bar x\|_D=\frac{|\beta-\alpha^\top \bar x|}{\sqrt{\alpha^\top D^{-1}\alpha}}.\end{equation}

We recall the following fact.
\begin{lemma}\label{lemma:convex-separation} Let $K\subseteq\R^n$ be a convex set, let $\bar x\in \R^n\sm K$, and let $z$ be the point in $K$ at minimum $D$-distance from $\bar x$. Then $\langle z-\bar x,x\rangle_D\geq \|z-\bar x\|^2_D$ for all $x\in K$, and $\langle z-\bar x,x\rangle_D\leq\|z-\bar x\|^2_D$ for all $x\in B_D(\bar x,\|z-\bar x\|_D)$.
\end{lemma}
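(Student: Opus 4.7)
The plan is to use the standard variational characterization of the $D$-projection onto a convex set, followed by a Cauchy--Schwarz estimate on the ball.

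For the first inequality, I would exploit convexity of $K$ together with the optimality of $z$. Given any $x \in K$, the segment $z(t) := z + t(x - z)$, $t \in [0,1]$, lies in $K$ by convexity, and since $z$ minimizes $D$-distance to $\bar x$ over all of $K$, the quadratic
\[
\phi(t) = \|z(t) - \bar x\|_D^2 = \|z - \bar x\|_D^2 + 2t\,\langle z - \bar x, x - z\rangle_D + t^2 \|x - z\|_D^2
\]
attains its minimum on $[0,1]$ at $t = 0$. Thus $\phi'(0) \geq 0$, i.e., $\langle z - \bar x, x - z\rangle_D \geq 0$. Rewriting and using $\langle z - \bar x, z - \bar x\rangle_D = \|z - \bar x\|_D^2$ produces the claimed lower bound after absorbing the $\bar x$-term on the appropriate side.

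For the second inequality, I would apply Cauchy--Schwarz directly in the $D$-inner product. For any $x \in B_D(\bar x, \|z - \bar x\|_D)$ we have $\|x - \bar x\|_D \leq \|z - \bar x\|_D$, and hence
\[
\langle z - \bar x,\, x - \bar x\rangle_D \;\leq\; \|z - \bar x\|_D \cdot \|x - \bar x\|_D \;\leq\; \|z - \bar x\|_D^2,
\]
which rearranges to the upper bound.

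I do not expect any genuine obstacle: both parts follow from routine inner-product arithmetic, and the only defining ingredient is that $z$ is the $D$-closest point to $\bar x$ in $K$. The one piece of care needed is the bookkeeping around the constant $\langle z - \bar x, \bar x\rangle_D$ that distinguishes $\langle z - \bar x, x\rangle_D$ from $\langle z - \bar x, x - \bar x\rangle_D$; one must apply the same shift in both inequalities so that the two bounds end up stated with a consistent normalization.
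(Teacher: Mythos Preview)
The paper does not actually prove this lemma; it is stated as a recalled fact without proof. Your approach---the first-order optimality condition for the projection plus Cauchy--Schwarz on the ball---is the standard one and is correct in spirit.

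However, your caution about the $\bar x$-term is more than bookkeeping: the lemma as printed is \emph{false} for general $\bar x$. From $\langle z-\bar x, x-z\rangle_D\ge 0$ you get
\[
\langle z-\bar x, x\rangle_D \;\ge\; \langle z-\bar x, z\rangle_D \;=\; \|z-\bar x\|_D^2 + \langle z-\bar x,\bar x\rangle_D,
\]
and the extra term $\langle z-\bar x,\bar x\rangle_D$ does not vanish in general, nor can it be ``absorbed'' away. A one-dimensional counterexample: take $D=I$, $K=[1,\infty)$, $\bar x=-1$; then $z=1$, and for $x=1\in K$ one has $\langle z-\bar x,x\rangle=2<4=\|z-\bar x\|^2$. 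The same shift arises in the Cauchy--Schwarz estimate for the ball, so the correct general statement replaces $x$ by $x-\bar x$ on the left-hand side of both inequalities (equivalently, adds $\langle z-\bar x,\bar x\rangle_D$ to the right-hand side). The paper only ever invokes this lemma with $\bar x=0$ (the origin), in which case your argument goes through verbatim and the stated form is correct; so the slip in the lemma's formulation does not affect anything downstream.
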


Throughout the paper we denote by $e^i$ the $i$th unit vector, where the dimension of the space will be clear from the context.

\section{The \Basic{}}\label{sec:basic algorithm}

Given $u\in\R^n$ such that $u>0$, define the vector $\ell:=\frac {u}{2n}$ and let $\tilde P:=\{x \st Ax = b,\, x\geq \ell\}$.
Denote by $D=(d_{ij})$ the $n\times n$ diagonal matrix whose $i$th diagonal element is $d_{ii}=4u_i^{-2}$.
Since $u>0$, $D$ is positive definite. Furthermore, $\{x\in\R^n\st 0\leq x\leq u\}\subset B_D(0,2\sqrt n)$.
\bigskip

Throughout the rest of the paper we denote $\mathcal A:=\scalar{Ax=b}$. The \Basic{} is based on the following lemma; the proof is illustrated in Figure~\ref{fig:increase ray}.

\begin{lemma}\label{lemma:p(z)}
Let $z\in\mathcal A$ such that $\scalar{z, x}_D\geq \|z\|_D^2$ is valid for $\tilde P $.  If $z\notin P$, let $i\in\{1,\ldots,n\}$ such that $z_i<0$, and let $K:=\{x\in\mathcal A\st \scalar{z,x}_D\geq \|z\|^2_D,\, x_i\geq \ell_i\}$. Assume that $K\neq\emptyset$, and let $z'$ be the point in $K$ at minimum $D$-distance from the origin. Then $\scalar{z', x}_D\geq \|z'\|^2_D$ is valid for $\tilde P $ and  $\|z'\|^2_D> \|z\|^2_D+\frac 1{n^2}$.\end{lemma}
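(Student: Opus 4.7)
The plan is to handle the two assertions separately. To establish that $\scalar{z',x}_D\ge\|z'\|_D^2$ is valid on $\tilde P$, I first argue that $\tilde P\subseteq K$: every $\bar x\in\tilde P$ lies in $\mathcal A$, satisfies $\bar x_i\ge\ell_i$ because $\bar x\ge\ell$, and satisfies $\scalar{z,\bar x}_D\ge\|z\|_D^2$ by the hypothesis on $z$. Since $z_i<0$ forces $z\neq 0$ and $\scalar{z,0}_D=0<\|z\|_D^2$, the origin is outside $K$, so $z'$ is a well-defined nonzero projection point. Lemma~\ref{lemma:convex-separation} applied with $\bar x=0$ and the convex set $K$ then yields $\scalar{z',x}_D\ge\|z'\|_D^2$ for every $x\in K$, and in particular for every $x\in\tilde P$.

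For the strict increase $\|z'\|_D^2>\|z\|_D^2+1/n^2$, my plan is a Pythagoras-type identity combined with a coordinate-wise lower bound on $\|z'-z\|_D$. Since $z'\in K$, we have $\scalar{z,z'}_D\ge\|z\|_D^2$, and expanding
\[
\|z'-z\|_D^2=\|z'\|_D^2-2\scalar{z,z'}_D+\|z\|_D^2\le\|z'\|_D^2-\|z\|_D^2
\]
gives $\|z'\|_D^2\ge\|z\|_D^2+\|z'-z\|_D^2$. Geometrically, the halfspace $\scalar{z,x}_D\ge\|z\|_D^2$ is tangent to the $D$-ball $B_D(0,\|z\|_D)$ at $z$, so any point of $K$ has $D$-norm at least $\|z\|_D$, with the excess controlled in quadrature by its $D$-distance to $z$.

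It then remains to show $\|z'-z\|_D>1/n$. This is where the coordinate constraint enters: since $z'\in K$ we have $z'_i\ge\ell_i$, while $z_i<0$, so $z'_i-z_i>\ell_i=u_i/(2n)$. Retaining only the $i$th diagonal term $d_{ii}=4/u_i^2$ in the sum defining $\|\cdot\|_D$ yields
\[
\|z'-z\|_D^2\ge\frac{4}{u_i^2}(z'_i-z_i)^2>\frac{4}{u_i^2}\cdot\frac{u_i^2}{4n^2}=\frac{1}{n^2},
\]
and combining with the Pythagoras bound above closes the argument. I do not anticipate a real obstacle here; the only conceptual point worth flagging is that the lower bound $\|z'-z\|_D>1/n$ uses \emph{only} the coordinate constraint $x_i\ge\ell_i$ (not the constraint $\scalar{z,x}_D\ge\|z\|_D^2$), and that the hypothesis on $z$ is exactly what makes the quadratic telescoping work: it places the tangent hyperplane to $B_D(0,\|z\|_D)$ at $z$, which is what allows the $D$-distance between $z$ and $z'$ to add in quadrature to $\|z\|_D^2$.
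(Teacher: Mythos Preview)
Your proof is correct and follows essentially the same approach as the paper's: both establish validity via $\tilde P\subseteq K$ and Lemma~\ref{lemma:convex-separation}, and both obtain the increase by combining the expansion $\|z'\|_D^2-\|z\|_D^2\ge\|z'-z\|_D^2$ (from $\scalar{z,z'}_D\ge\|z\|_D^2$) with the bound $\|z'-z\|_D>1/n$ coming from the $i$th coordinate. The only cosmetic difference is that the paper phrases the second step as ``$B_D(0,\sqrt{\|z\|_D^2+1/n^2})$ is disjoint from $K$'' and computes the $D$-distance from $z$ to the hyperplane $\{x_i=\ell_i\}$ via~(\ref{eq:distance to hyperplane}), whereas you extract the same $1/n$ bound directly from the single diagonal entry $d_{ii}=4/u_i^2$; the underlying algebra is identical.
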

\begin{proof}
Since $K$  is a polyhedron containing $\tilde P $, it follows from Lemma \ref{lemma:convex-separation} that $\scalar{z', x}_D\geq \|z'\|^2_D$ is valid for $\tilde P$.
Applying (\ref{eq:distance to hyperplane}) with $\alpha =e^i$, $\beta=\ell_i$, and $\bar x=z$, we obtain that the $D$-distance between $z$ and $\scalar{x_i=\ell_i}$ is equal to
$$\frac{|\ell_i-z_i|}{\sqrt{(e^i)^\top D^{-1}e^i}}>2\frac{\ell_i}{u_i}=\frac 1 n.$$
It follows that every point in  $B_D(z,\frac 1n)$ violates the inequality $x_i\geq \ell_i$.   Next, we  show that, for  every $x\in B_D(0,\sqrt{\|z\|^2_D+\frac 1{n^2}})$, if $x$ satisfies $\scalar{z,x}_D\geq \|z\|^2_D$ then  $x\in B_D(z,\frac 1n)$. Indeed, any such point $x$ satisfies $\|z-x\|_D^2=\|z \|_D^2+\|x \|^2_D-2\scalar{z,x }_D\leq  2\|z \|_D^2+\frac 1{n^2}-2\|z\|^2_D=\frac 1{n^2}$.
Since every point in  $B_D(z,\frac 1n)$ violates $x_i\geq \ell_i$, it follows that
$B_D(0,\sqrt{\|z\|^2_D+\frac 1{n^2}})$ is disjoint from $K$ thus, by definition of $z'$, $\|z'\|^2_D> {\|z\|^2_D+\frac 1{n^2}}$.
\end{proof}

\begin{figure}[h]
\psfrag{R}{$B_D(0,\sqrt{\|z\|_D^2+\frac 1{n^2}})$}
\psfrag{r}{$r$}
\psfrag{K}{$K$}
\psfrag{z}{$z$}
\psfrag{p}{$z'$}
\psfrag{B}{$B_D(z,\frac 1n)$}
\psfrag{0}{$0$}
\psfrag{h1}{$\scalar{z,x}_D\geq \|z\|^2_D$}
\psfrag{h2}{$x_i\geq \ell_i$}
\begin{center}
\includegraphics[scale=.6]{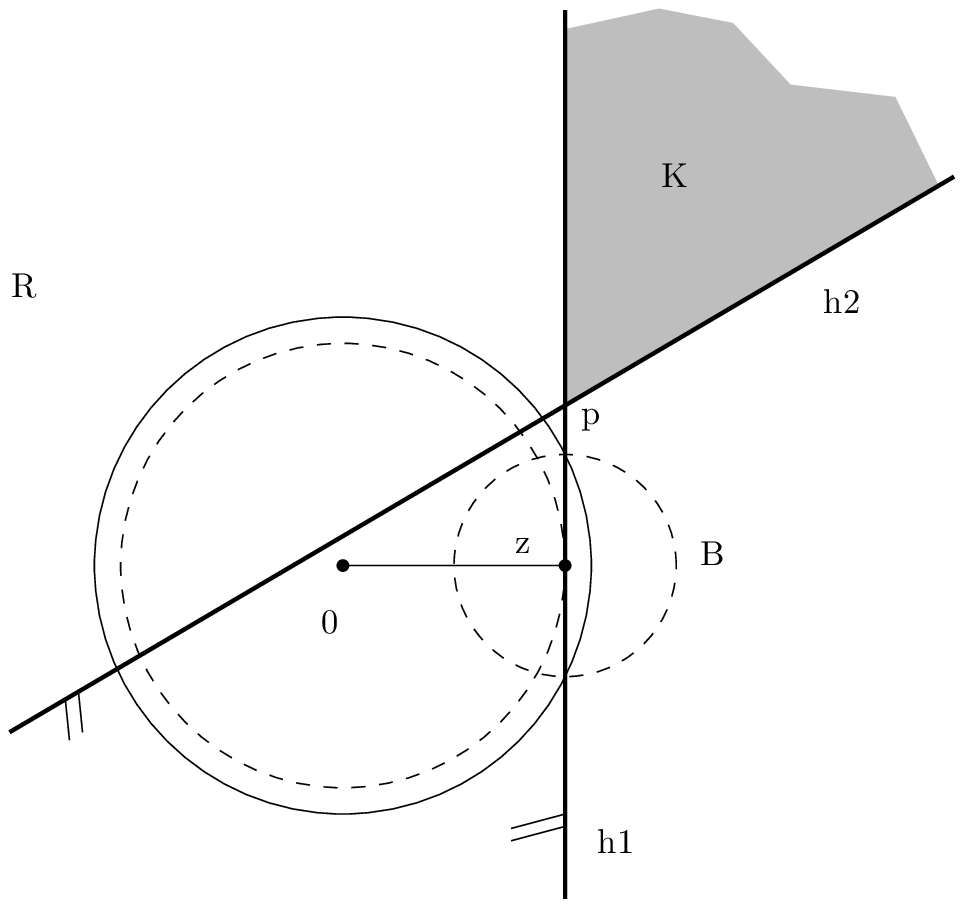}
\caption{\label{fig:increase ray} Bubbles merge: the
  $D$-ball $B_D(0,\sqrt{\|z\|_D^2+\frac 1{n^2}})$ is disjoint from $K$.}
\end{center}
\end{figure}

Note that, if $\scalar{z,x}_D\geq \|z\|_D^2$ is a valid inequality for $\tilde P$, then there exists $(v,w)\in\R^m\times\R^n_+$ such that $Dz=  A^\top v+w$ and $\|z\|^2_D=v^\top b+w^\top \ell$, because $\scalar{z,x}_D=(Dz)^\top x$.  Also, whenever $K$ in the statement of Lemma~\ref{lemma:p(z)} is empty, there exists a vector $(v,w)\in\R^m\times\R^n_+$ such that $A^\top v+w=0$ and $v^\top b+w^\top \ell>0$. We will detail in Section \ref{sec:closest point} how these vectors $(v,w)$ can be computed. Lemma \ref{lemma:p(z)} and the above considerations suggest the algorithm in Figure~\ref{fig:bubble}.

\begin{figure}[h]{\sl
\noindent{\bf \Basic{}}
\medskip

\noindent{\bf Input: } A system $Ax=b$, $x\geq 0$, and a vector $u>0$.
\smallskip

\noindent{\bf Output: } Either:

\noindent\hspace{.2cm} 1.~A point in $P$, or;

\noindent\hspace{.2cm} 2.~$(v,w)\in\R^m\times\R^n_+$ such that

\noindent\hspace{.2cm} \phantom{2.} $(v^\top A+w^\top)x< v^\top b+w^\top \ell$~~$\forall\,x\in B_D(0,2\sqrt{n})$.
\bigskip

\noindent Initialize $z$ as the point in $\mathcal A$ at minimum $D$-distance
from $0$.
Set $\ell:=\frac {u}{2n}$.

\noindent{\bf While} $\|z\|_D\leq 2\sqrt{n}$, do

\noindent\hspace{.2cm}{\bf If} $z\in P$, STOP;

\noindent\hspace{.2cm }{\bf Else}, Choose $i$ such that  $z_i<0$;

\noindent\hspace{.4cm}Let $K:=\{x\in\mathcal A\st \scalar{z,x}_D\geq \|z\|^2_D,\, x_i\geq \ell_i\}$.

\noindent\hspace{.4cm}{\bf If} $K=\emptyset$, {\bf then} output $(v,w)\in\R^m\times\R^n_+$

\noindent\hspace{.6cm}such that $A^\top v+w=0$ and $v^\top b+v^\top \ell>0$;

\noindent\hspace{.4cm}{\bf Else} Reset $z$ to be the point in $K$

\noindent\hspace{.6cm} at minimum $D$-distance from $0$,

\noindent{\bf Endwhile;}
\smallskip

\noindent Output $(v,w)\in\R^m\times\R^n_+$ such that\\ $Dz=  A^\top v+w$ and $\|z\|^2_D=v^\top b+w^\top \ell$.
}
\caption{The Bubble algorithm}\label{fig:bubble}
\end{figure}

By Lemma~\ref{lemma:p(z)}, the value of $\|z\|_D^2$ increases by at least $\frac 1{n^2}$ at every iteration. Therefore, after at most $4n^3$ iterations, $\|z\|_D>2\sqrt{n}$. In particular, if the algorithm terminates outside the ``while'' cycle, then the inequality $\scalar{z,x}_D\geq \|z\|^2_D$ is valid for $\tilde P$ and it is violated by every point in $B_D(0, {2\sqrt n})$, and therefore by every point in $\{x\st 0\leq x\leq u\}$. Note that this is the second outcome of Theorem \ref{thm:basic algorithm}.

To show that the \Basic{} is strongly polynomial, we need to address two issues. The first is how to compute, at any iteration, the new point $z$ and a vector $(v,w)\in\R^m\times \R^n_+$ such that $Dz=  A^\top v+w$ and $\|z\|^2_D=v^\top b+w^\top \ell$, and how to compute a Farkas certificate of infeasibility for $\tilde P$ if the \Basic{} stops with $K=\emptyset$.
In Section~\ref{sec:closest point}, we show how this step can be implemented in $O(n)$ time, and therefore \Basic{} performs $O(n^4)$ arithmetic operations.
 The second issue is that, in the above algorithm, the encoding size of the vector $z$ computed could grow exponentially. In Section~\ref{sec:numerical} we show how this can be avoided by performing an appropriate rounding at every iteration.

Note that our \Basic{} terminates with either a point in $P$, or with a separating hyperplane. The latter arises in two cases: either if $K=\emptyset$ in a certain iteration, or at the end of the while cycle.
The Divide-and-Conquer algorithm in Chubanov \cite{Chubanov-binary} and Basu et al. \cite{Basu-DeLoera-Junod} has three possible outcomes, including a ``failure'' scenario, corresponding to $K=\emptyset$. Their reason for handling this scenario separately is due to an initial homogenization step; we do not have to make such a distinction as we do not perform such an homogenization.

\subsection{Computing  the closest point in $K$}
\label{sec:closest point}

Instead of maintaining $(v,w)\in\R^m\times \R^n_+$ with $Dz=  A^\top v+w$ and $\|z\|^2_D=v^\top b+w^\top \ell$, it is more convenient to work with a different representation inside the affine subspace $\mathcal A$, as detailed below.

Let us denote by $r^0$ the point in $\mathcal A$ at minimum $D$-distance from the origin. By (\ref{eq:projection}) and Remark \ref{rmk:lambda}, using Gaussian elimination we can compute in strongly polynomial time $\bar v^0\in\R^m$ such that $Dr^0=A^\top \bar v^0$ and $\|r^0\|^2_D=b^\top \bar v^0$.

\begin{remark}\label{rmk:distance from origin} Observe that, for every $x\in \mathcal A$, $\scalar{x-r^0, r^0}_D=0$, thus $\|x\|^2_D=\|x-r^0\|_D^2+\|r^0\|_D^2$. It follows that,  for any convex set $C\subseteq\mathcal A$, the point in $C$ at minimum $D$-distance from the origin is the point in $C$ at minimum $D$-distance from $r^0$.
\end{remark}

For $j=1,\ldots,n$, we may assume that $\emptyset \neq \{x\in\mathcal A\st x_j\geq \ell_j\}\subsetneq \mathcal A$. Under this assumption, there exists  $\alpha^j\in\mathcal A-r^0$ and $\beta_j\in\R$ such that  $\|\alpha^j\|_D=1$ and $\{x\in\mathcal A\st x_j\geq\ell_j\}=\{x\in\mathcal A\st \scalar{\alpha^j,x}_D\geq\beta_j\}$. Note that $(\alpha^j,\beta^j)$ can be computed using Gaussian elimination, along with $\bar v^j\in\R^m$ and $\bar w_j\in\R_+$ such that $D\alpha^j=A^\top \bar v^j+\bar w_j e^j$,  $\beta_j=b^\top \bar v^j+\bar w_j \ell_j$. Observe that, if we denote by $r^j$ the point in $\{x\in\mathcal A\st x_j=\ell_j\}$ at minimum $D$-distance from the origin, then $|\beta|=\|r^j-r^0\|_D$ and $r^j=r^0+\beta \alpha_j$.

We may assume that $r^0\notin P$, otherwise the algorithm terminates immediately. It follows that at the first iteration $z=r^t$ for some $t$ such that $\beta_t>0$. We can assume that at the first iteration, we choose $t=\arg\max_{j}\beta_j$. In particular, $\|z\|_D\geq \|r^j\|_D$ for all $j$ such that $\beta_j>0$. By Lemma \ref{lemma:p(z)} and Remark \ref{rmk:distance from origin}, $\|z-r^0\|_D\geq \frac 1n$.

\bigskip

At any subsequent iteration, we are given a point  $z$ in $\mathcal A\sm P$ such that $\scalar{z,x}_D\geq \|z\|_D^2$ is valid for $\tilde P$. Let $\alpha=(z-r^0)/\|z-r^0\|_D$ and $\beta=\|z-r^0\|$. It follows that $\{x\in \mathcal A\st \scalar{z,x}_D\geq \|z\|_D^2\} =\{x\in \mathcal A\st \scalar{\alpha,x}_D\geq \beta\}$. We will maintain a vector $\lambda\in\R^n_+$ such that
\begin{equation}
\label{def:lambda}
(\alpha,\beta)=\sum_{j=1}^n \lambda_j (\alpha^j,\beta_j).
\end{equation}
These provide the vectors $(v,w)$ with $Dz=A^\top v+w$,
$\|z\|_D^2=b^\top v+\ell^\top w$, and $w\geq 0$, by
defining
 $v:=\bar v^0+\beta\sum_{j=1}^n \lambda_j \bar v^j$ and $w:=\beta \sum_{j=1}^n\lambda_j \bar w_j e^j$.

\bigskip

In every iteration,  our algorithm terminates if $z\ge 0$, or it picks an index with
$z_i<0$ and defines $K:=\{x\in\mathcal A\st \scalar{z,x}_D\geq \|z\|^2_D,\, x_i\geq \ell_i\}$.

If $K=\emptyset$ then the algorithm terminates; otherwise, the current $z$ is replaced by the point $z'$ in $K$ at minimum $D$-distance from the origin. 

In the rest of this section, we describe how to compute $z'$ if $K\neq \emptyset$, or a Farkas certificate of infeasibility if $K=\emptyset$.
Let
$$
\bar K:=\{x\in\R^n\st \scalar{\alpha,x}_D\geq\beta,\, \scalar{\alpha^i,x}_D\geq\beta_i\}.
$$
It follows that $K=\mathcal A\cap \bar K$.

\begin{claim}\label{claim:K and K bar}  If $\bar K \neq \emptyset$, then the point in $\bar K$ at minimum $D$-distance from $r^0$ is equal to $z'$. In particular $K\neq\emptyset$ if and only if $\bar K\neq\emptyset$.
\end{claim}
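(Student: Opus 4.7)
The idea is to use the fact that the constraints defining $\bar K$ are written in terms of the normals $D\alpha$ and $D\alpha^i$, where $\alpha,\alpha^i\in\mathcal A-r^0=\ker A$. Hence the KKT optimality conditions for the $D$-projection onto $\bar K$ force the optimal displacement from $r^0$ to lie in $\ker A$, which gives a point already in $\mathcal A$.

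First, I would invoke Remark~\ref{rmk:distance from origin} to replace distance from the origin by distance from $r^0$: since $K\subseteq\mathcal A$, the point $z'$ in $K$ at minimum $D$-distance from $0$ is also the point in $K$ at minimum $D$-distance from $r^0$. So it suffices to show that the (unique) $D$-closest point in $\bar K$ to $r^0$, call it $\bar z$, lies in $\mathcal A$; if this holds, then $\bar z\in \mathcal A\cap\bar K=K$ and, since $K\subseteq \bar K$, $\bar z$ also minimizes the $D$-distance to $r^0$ over $K$, i.e.\ $\bar z=z'$.

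To show $\bar z\in\mathcal A$, I would apply Lemma~\ref{lemma:convex-separation} (or, equivalently, the KKT conditions) to the $D$-projection of $r^0$ onto the polyhedron $\bar K$. Since $\bar K\neq\emptyset$ is defined by the two inequalities $\scalar{\alpha,x}_D\geq\beta$ and $\scalar{\alpha^i,x}_D\geq\beta_i$, whose respective gradients (in the standard inner product) are $D\alpha$ and $D\alpha^i$, the optimality conditions yield multipliers $\mu,\mu_i\geq 0$ with
\[
D(\bar z-r^0)=\mu\, D\alpha+\mu_i\, D\alpha^i,
\]
and hence $\bar z-r^0=\mu\alpha+\mu_i\alpha^i$. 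Now recall that $\alpha^i\in \mathcal A-r^0=\ker A$ by construction, and $\alpha=(z-r^0)/\|z-r^0\|_D\in\ker A$ as well (because $z\in\mathcal A$). Therefore $\bar z-r^0\in\ker A$, so $\bar z\in\mathcal A$, as required.

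Finally, for the ``in particular'' part, the inclusion $K\subseteq \bar K$ gives the direction $K\neq\emptyset\Rightarrow \bar K\neq\emptyset$, and the argument above shows the converse: if $\bar K\neq\emptyset$, then $\bar z\in\mathcal A\cap\bar K=K$, so $K\neq\emptyset$. The main technical point is really the single observation that the normals $D\alpha,D\alpha^i$ of the constraints defining $\bar K$ are images under $D$ of vectors parallel to $\mathcal A$, so projection onto $\bar K$ automatically preserves the affine subspace $\mathcal A$; the rest is bookkeeping.
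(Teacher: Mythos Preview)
Your proposal is correct and follows essentially the same argument as the paper: both show that the $D$-projection $\bar z$ of $r^0$ onto $\bar K$ lies in $\mathcal A$ by writing $D(\bar z-r^0)$ as a nonnegative combination of the constraint normals $D\alpha$ and $D\alpha^i$, cancelling $D$, and using that $\alpha,\alpha^i\in\ker A$. The only cosmetic difference is that the paper phrases the multiplier step via Lemma~\ref{lemma:convex-separation} (the valid inequality $\scalar{\bar z-r^0,x}_D\geq\|\bar z-r^0\|_D^2$ for $\bar K$ must be a conic combination of the two defining inequalities), whereas you name it as KKT; these are the same observation.
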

\begin{proof} Let  $\bar z$ be the point in $\bar K$ at minimum $D$-distance from $r^0$. Since $\scalar{\bar z-r^0,x}_D\geq \|\bar z-r^0\|^2_D$ is valid for $\bar K$, it follows that $D(\bar z-r^0)=\mu_1 D\alpha^i+\mu_2 D\alpha$ for some $\mu_1,\mu_2\geq 0$. From this, we get $\bar z=r^0+\mu_1 \alpha^i+\mu_2\alpha$, which implies $A\bar z=Ar^0+\mu_1 A\alpha^i+\mu_2A\alpha=b$. This shows that $\bar z\in \mathcal A$, and thus $\bar z\in K$. Since $K\subseteq \bar K$, it follows that $\bar z$ is the point in $K$ at minimum $D$-distance from $r^0$, and thus from the origin, i.e. $\bar z=z'$. In particular, if $\bar K\neq\emptyset$ then also $K\neq \emptyset$. Conversely, if $K\neq \emptyset$ then $\bar K\neq \emptyset$ because $K\subseteq \bar K$.
\end{proof}

\begin{claim}\label{claim:K nonempty} $K \neq \emptyset$ if and only if $\alpha^i$ and $\alpha$ are linearly independent. If $K\neq\emptyset$, then $z'$ is the point in
$$
\mathcal L:=\scalar{\scalar{\alpha,x}_D=\beta,\, \scalar{\alpha^i,x}_D=\beta_i}
$$
 at minimum $D$-distance from $r^0$.
\end{claim}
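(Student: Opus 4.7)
The plan is to reduce to $\bar K$ via the preceding Claim~\ref{claim:K and K bar} and then apply the projection formula (\ref{eq:projection}) directly. By Claim~\ref{claim:K and K bar}, $K\neq\emptyset$ iff $\bar K\neq\emptyset$, and when non-empty $z'$ equals the $D$-projection of $r^0$ onto $\bar K$. The key preliminary observation is that $\alpha$ and $\alpha^i$ both lie in $\mathcal A-r^0$, which is $D$-orthogonal to $r^0$ by Remark~\ref{rmk:distance from origin}; consequently $\langle\alpha,r^0\rangle_D=\langle\alpha^i,r^0\rangle_D=0$, so $\langle\alpha,z\rangle_D=\beta$ from $z=r^0+\beta\alpha$, while $\langle\alpha^i,z\rangle_D<\beta_i$ from $z_i<0<\ell_i$ together with the definition of $\alpha^i,\beta_i$.

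For the forward direction, suppose $\alpha,\alpha^i$ are linearly dependent; since both are $D$-unit vectors, $\alpha^i=c\alpha$ with $c\in\{-1,+1\}$. The preliminary observation gives $c\beta=\langle\alpha^i,z\rangle_D<\beta_i$. When $c=-1$, any $x\in\bar K$ would need $\langle\alpha,x\rangle_D\geq\beta$ and $-\langle\alpha,x\rangle_D\geq\beta_i$, forcing $\beta+\beta_i\leq 0$, which contradicts $\beta+\beta_i>0$ (from $-\beta<\beta_i$); hence $\bar K=\emptyset$. The case $c=+1$ is more delicate and is likely the main obstacle in the forward direction: it will require exploiting the sign convention built into the algorithm's construction of the pair $(\alpha^i,\beta_i)$ (the requirement $\bar w_i>0$ in $D\alpha^i=A^\top\bar v^i+\bar w_i e^i$) together with the choice of $i$ in the current iteration, to derive a contradiction analogous to the $c=-1$ case.

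For the reverse direction and for the characterization of $z'$, assume $\alpha,\alpha^i$ are linearly independent. Let $C$ be the $2\times n$ matrix with rows $(D\alpha)^\top$ and $(D\alpha^i)^\top$, and let $d:=(\beta,\beta_i)^\top$, so that $\mathcal L=\{x\st Cx=d\}$. A direct calculation gives $CD^{-1}C^\top$ with both diagonal entries equal to $1$ and both off-diagonal entries equal to $\gamma:=\langle\alpha,\alpha^i\rangle_D$; its determinant $1-\gamma^2$ is strictly positive by the strict Cauchy--Schwarz inequality. Then (\ref{eq:projection}) produces the unique $D$-projection $z^*$ of $r^0$ onto $\mathcal L$, which (using $Cr^0=0$) has the form $z^*=r^0+\mu_1\alpha+\mu_2\alpha^i$ for explicit scalars $\mu_1,\mu_2$. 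Since $\alpha,\alpha^i\in\mathcal A-r^0$ we get $z^*\in\mathcal A$, and $z^*\in\mathcal L\subseteq\bar K$ yields $\bar K\neq\emptyset$. To conclude $z^*=z'$, I invoke the KKT characterization (via Remark~\ref{rmk:lambda} and Lemma~\ref{lemma:convex-separation}) of the projection of $r^0$ onto $\bar K$: $z'-r^0=\mu_1'\alpha+\mu_2'\alpha^i$ with $\mu_1',\mu_2'\geq 0$ and complementary slackness against the two defining inequalities of $\bar K$. The remaining technical step is to rule out either multiplier vanishing so that $z'$ is forced into $\mathcal L$: $\mu_2'=0$ would give $z'=z$, contradicting $\langle\alpha^i,z\rangle_D<\beta_i$; and $\mu_1'=0$ would give $z'=r^i$, which is to be excluded by a sign/feasibility check using $\beta>0$ and the preliminary observation. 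With both multipliers strictly positive, complementary slackness forces $z'\in\mathcal L$, and uniqueness of the projection onto $\mathcal L$ yields $z'=z^*$.
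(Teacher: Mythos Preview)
Your overall strategy matches the paper's---reduce to $\bar K$ via Claim~\ref{claim:K and K bar} and argue that both defining inequalities must be tight at $z'$---but there is a genuine flaw in your handling of the case $c=+1$. You aim to derive $\bar K=\emptyset$ ``analogous to the $c=-1$ case'', yet when $\alpha^i=\alpha$ the set $\bar K$ is the single half-space $\{x:\scalar{\alpha,x}_D\geq\max(\beta,\beta_i)\}$, which is never empty. So that route cannot succeed. What actually excludes $c=+1$ is the algorithmic invariant $\beta\geq\beta_j$ for every $j$ with $\beta_j>0$, established by the initial choice $t=\arg\max_j\beta_j$ and preserved because $\|z\|_D$ is nondecreasing; this directly contradicts the inequality $\beta<\beta_i$ that you correctly obtained from $c=+1$. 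Your suggested detour via the sign convention $\bar w_i>0$ does not supply this information.

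The same invariant, together with $\|z'\|_D>\|z\|_D$ from Lemma~\ref{lemma:p(z)}, is also the clean way to close your $\mu_1'=0$ sub-case: if $\beta_i>0$ then $z'=r^i$ gives $\|z'\|_D=\|r^i\|_D\leq\|z\|_D$, a contradiction; if $\beta_i\leq 0$ then the nonnegativity of $\mu_2'$ forces $z'=r^0\notin\bar K$. The paper organizes the entire proof around the single inequality $\|z'\|_D>\|z\|_D$: it first shows both constraints are tight at $z'$ (ruling out $z'=z$ and $z'\in\{r^i,r^0\}$ by the above), hence $z'\in\mathcal L$; only then does it treat linear dependence, at which point $\mathcal L$ collapses to the single hyperplane $\scalar{\scalar{\alpha,x}_D=\beta}$ and the projection onto it is $z$, again contradicting $\|z'\|_D>\|z\|_D$. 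This ordering sidesteps the $c=\pm 1$ case split entirely.
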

\begin{proof} Assume that $ K\neq \emptyset$. By Claim \ref{claim:K and K bar}, $z'$ is the point in $\bar K$ at minimum $D$-distance from $r^0$.  We will show that $z'$ is the point in $\mathcal L$ at minimum $D$-distance from $r^0$. It suffices to show that $z'$ satisfies $\scalar{\alpha,z'}_D=\beta$ and $\scalar{\alpha^i,z'}_D=\beta_i$. If $\scalar{\alpha^i,z'}_D>\beta^i$, then $z'$ is the point in $\{x\st \scalar{\alpha,x}_D\geq\beta\}$ at minimum $D$-distance from $r^0$, and thus $z'=z$, contradicting the fact that $\|z'\|_D>\|z\|_D$. If $\scalar{\alpha,z'}_D=\beta$, then $z'$ is the point in $\{x\st  \scalar{\alpha^i,x}_D\geq  \beta_i\}$ at minimum $D$-distance from $r^0$. If $\beta_i>0$ then $z'=r^i$, contradicting the fact that $\|z'\|_D>\|z\|_D\geq \max_{j}\beta_j$, whereas if $\beta_i\leq 0$ then $z'=r^0$, contradicting the fact that $r^0\notin \bar K$.

For the first part of the statement, by Claim \ref{claim:K and K bar}, $K\neq\emptyset$ if and only if $\bar K\neq\emptyset$. Clearly $\bar K\neq \emptyset$ if $\alpha^i$ and $\alpha$ are linearly independent. Conversely, assume $\bar K\neq \emptyset$. If $\alpha^i$ and $\alpha$ are linearly dependent, then, because  $z'$ is the point in $\mathcal L$ at minimum $D$-distance from $r^0$, it follows that $\mathcal L=\scalar{\scalar{\alpha^i,x}_D=\beta^i}=\scalar{\scalar{\alpha,x}_D=\beta}$ and that $z'=z$, a contradiction.
\end{proof}

\noindent{\sl Case $K\neq \emptyset$.} By Claim~\ref{claim:K nonempty} $z'$ is the closest point in $\mathcal L$ from $r^0$, and $\alpha^i$ and $\alpha$ are linearly independent. According to Remark \ref{rmk:lambda},  we have that $z'-r^0=\mu^1 \alpha +\mu^2 \alpha^i$, where $(\mu_1,\mu_2)^\top
=(CD^{-1}C^\top)^{-1}(d-Cr^0)$,  $C$ being the $2\times n$ matrix
whose rows are $(D\alpha^i)^\top$ and $D\alpha^\top$, and where $d\in\R^2$ is defined by
$d_1=\beta_i$, $d_2=\beta$.
A simple computation gives that
\begin{equation}\label{eq:mu}
\mu_1=\frac{\beta^i-\beta\scalar{\alpha^i,\alpha}_D}{1-\scalar{\alpha^i,\alpha}_D^2}\quad
\mu_2=\frac{\beta-\beta^i\scalar{\alpha^i,\alpha}_D}{1-\scalar{\alpha^i,\alpha}_D^2}.
\end{equation}
We also claim that  $\mu_1,\mu_2\geq 0$. Indeed, $\scalar{z'-r^0,x}_D\geq \|z'-r^0\|_D^2$ is a valid linear inequality for $\bar K$, and $\mu_1$ and $\mu_2$ are the unique coefficients satisfying  $D(z'-r_0)=\mu_1\alpha^i+\mu_2\alpha$ and $\|z'-r^0\|_D^2=\mu_1\beta^i+\mu_2\beta$.
Defining
\begin{equation}
\beta'=\|z'-r^0\|_D,\, \alpha'=(z'-r^0)/\beta',\, \lambda':=(\mu_1 e^i+\mu_2\lambda)/\beta',\label{def:lambda'}
\end{equation}
we have that  $\lambda'\geq 0$ and $(\alpha',\beta')=\sum_{j=1}^n \lambda'_j (\alpha^j,\beta_j)$.
Therefore, $z'$ and $\lambda'$ can be computed by performing $O(n)$ arithmetic operations at every iteration of the \Basic{}.
\begin{remark} \label{rmk:postive lambdas} Since $z'\in\scalar{\scalar{\alpha^i,x}_D= \beta_i}$, it follows that $\|z'\|_D\geq \|r^i\|_D$. Therefore, at every iteration of the algorithm, $|\beta_j|\leq \beta$ whenever $\lambda_j>0$.
\end{remark}
\bigskip

\noindent{\sl Case $K= \emptyset$.} By Claim~\ref{claim:K nonempty}, $\bar K=\emptyset$ and the vectors $\alpha_i$ and $\alpha$ are linearly dependent. This implies that, for some $\nu>0$, $\alpha^i=-\nu \alpha$  and $\beta^i>-\nu \beta$. Defining $\lambda':=e^i+\nu\lambda$, we obtain that $\sum_{j=1}^n \lambda'_j \alpha^j=0$ and $\sum_{j=1}^n\lambda'_j \beta_j>0$.

A Farkas certificate of infeasibility $(v',w')$ can be obtained by setting  $v':=\bar v^0+\sum_{j=1}^n\lambda'_j\bar v^j$ and $w':=\sum_{j=1}^n\lambda'_j\bar w_j e^j$. We thus have $A^\top v'+w'=0$, $w'\geq 0$ and  $b^\top v'+\ell^\top w'>0$, showing infeasibility of $\tilde P$.

\subsection{Bounding the encoding sizes}\label{sec:numerical}

Note that the encoding size of the vector $z$ within the \Basic{} could grow exponentially, and also the size of the upper bound vector $u$ maintained by the LP algorithm. 

To maintain the size of $u$ polynomially bounded, we will perform a rounding at every iteration as follows. Instead of the new bounds $u'_j$ as in
Claim~\ref{claim:u'}, let us define $\tilde u_j$ as the smallest
integer multiple of $1/(3n\Delta)$ with $u'_j\le \tilde u_j$. We
proceed to the next iteration of the \Basic{} with the input vector
$\tilde u$. Clearly the encoding size of $\tilde u$ is polynomially bounded in $n$ and $L$, and we shall show in the next section that this rounding does not affect the asymptotic running time bound of $O([n^5/\log n]L)$.

\medskip
In the rest of this section we show how  a  rounding step can be introduced in the \Basic{}
in order to  guarantee that the sizes of the numbers remain polynomially bounded. The rounding will be performed on the coefficients
$\lambda_j$ in (\ref{def:lambda}).

In every iteration, after the new values of $z$ and the $\lambda_j$'s are obtained, we replace them by $\tilde z$ and $\tilde \lambda_j$ satisfying (\ref{def:lambda}), such that these values have polynomial encoding size.
At the same time, we show that $\|z\|^2_D-\|\tilde z\|^2_D\leq \frac
1{2n^2}$ (Claim~\ref{cl:tilde-z}); since at every iteration of the \Basic{} the value of $\|z\|_D^2$ increases by at least $\frac 1{n^2}$, the number of iterations in \Basic{} may increase by at most a factor of 2, to $8n^3$. Let
$$
q:=\ceil{16 n^{3}}.
$$

For every number $a\in \R$, we denote by $[a]_q$ the number of the form $p/q$, $p\in\Z$, with $|p/q- a|$ minimal. Given the current point $z$ and $\lambda\in \R^n_+$ satisfying (\ref{def:lambda})
let
$$
(\gamma,\delta):=\sum_{j=1}^n [\lambda_j]_q (\alpha^j,\beta^j).
$$
It follows that $\scalar{\gamma,x}_D\geq \delta$ is a valid inequality for $\tilde P$. Let us define  $\tilde z$ as the closest point in $\scalar{\scalar{\gamma,x}_D=\delta}$ to $r^0$.
This can be obtained by
\[
\tilde \alpha:=\gamma/\|\gamma\|_D,\quad \tilde\beta =\delta/\|\gamma\|_D,\quad \tilde z:=r_0+\tilde \alpha\tilde \beta,\quad  \tilde \lambda_j:=[\lambda_j]_q/\|\gamma\|_D
\]
Note that $(\tilde \alpha,\tilde\beta)=\sum_{j=1}^n\tilde\lambda_j(\alpha^j,\beta^j)$ and
 $\|\tilde z-r^0\|_D=|\tilde \beta|$ hold. The next claim will show that $\tilde \beta>0$.

\begin{claim}\label{cl:tilde-z} $\|z\|^2_D-\|\tilde z\|^2_D\leq \frac 1{2n^2}$ and $\tilde \beta>0$.
\end{claim}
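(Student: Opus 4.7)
The plan is to reduce the claim to a perturbation analysis. By Remark~\ref{rmk:distance from origin}, since both $z-r^0$ and $\tilde z-r^0=(\delta/\|\gamma\|_D^2)\gamma$ lie in $\mathcal A - r^0$ and are hence $D$-orthogonal to $r^0$, one has $\|z\|_D^2 = \|r^0\|_D^2 + \beta^2$ and $\|\tilde z\|_D^2 = \|r^0\|_D^2 + \tilde\beta^2$ with $\tilde\beta = |\delta|/\|\gamma\|_D$. Thus the first part of the claim reduces to showing $\beta^2 - \tilde\beta^2 \leq 1/(2n^2)$. Setting $\mu_j := [\lambda_j]_q - \lambda_j$, $\eta := \gamma-\alpha = \sum_j \mu_j \alpha^j$, and $\sigma := \delta - \beta = \sum_j \mu_j \beta_j$, note that $|\mu_j|\leq 1/(2q)$, and $\mu_j = 0$ whenever $\lambda_j = 0$, so at most $n$ indices contribute. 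The triangle inequality, $\|\alpha^j\|_D = 1$, and Remark~\ref{rmk:postive lambdas} (which gives $|\beta_j| \leq \beta$ whenever $\mu_j\neq 0$) then yield $\|\eta\|_D \leq n/(2q)$ and $|\sigma| \leq n\beta/(2q)$.

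The heart of the proof is the identity
\[
\beta^2\|\gamma\|_D^2 - \delta^2 \;=\; 2\beta\bigl(\beta\scalar{\alpha,\eta}_D - \sigma\bigr) + \bigl(\beta^2\|\eta\|_D^2 - \sigma^2\bigr),
\]
obtained by expanding $\|\gamma\|_D^2 = 1 + 2\scalar{\alpha,\eta}_D + \|\eta\|_D^2$ and $\delta^2 = (\beta+\sigma)^2$. Combined with the lower bound $\|\gamma\|_D^2 \geq (1 - \|\eta\|_D)^2 > 1/2$, the bounds from the previous paragraph, the while-loop bound $\beta \leq 2\sqrt n$, and $q \geq 16n^3$, this yields $\beta^2 - \tilde\beta^2 = (\beta^2\|\gamma\|_D^2 - \delta^2)/\|\gamma\|_D^2 \leq 1/(2n^2)$. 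For the positivity statement, $|\sigma| \leq n\beta/(2q) \leq \beta/(32n^2) < \beta$ gives $\delta = \beta + \sigma > 0$, and $\|\gamma\|_D > 0$, so $\tilde\beta = \delta/\|\gamma\|_D > 0$.

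The main obstacle is quantitative: a direct triangle-inequality bound on $\beta\scalar{\alpha,\eta}_D - \sigma$ is only of order $1/n$, one factor of $n$ short of the target. Extracting the missing factor should exploit the weighted identity $\sum_j \lambda_j\bigl(\scalar{\alpha^j, z-r^0}_D - \beta_j\bigr) = \scalar{\alpha, z-r^0}_D - \beta = 0$ coming from the representation $(\alpha,\beta) = \sum_j \lambda_j(\alpha^j,\beta_j)$, which imposes a cancellation on $\sum_j \mu_j\bigl(\scalar{\alpha^j, z-r^0}_D - \beta_j\bigr) = \beta\scalar{\alpha,\eta}_D - \sigma$ through the $\lambda$-weighted structure of the rounding perturbation.
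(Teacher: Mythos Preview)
Your setup and perturbation bounds match the paper's exactly: both establish $\|\gamma-\alpha\|_D\le n/(2q)$ and $|\delta-\beta|\le n\beta/(2q)$, and both arrive at $\|z\|_D^2-\|\tilde z\|_D^2=\beta^2-\tilde\beta^2\le O(n^2/q)$. The paper's route is more direct than your identity: it writes $\beta-\tilde\beta\le|\beta-\delta|+\tilde\beta\,\bigl|\|\gamma\|_D-1\bigr|\le n\beta/q$ (using $\bigl|\|\gamma\|_D-1\bigr|\le\|\gamma-\alpha\|_D$) and multiplies by $\beta+\tilde\beta\le 4\sqrt n$ to obtain $\beta^2-\tilde\beta^2\le 8n^2/q$. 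Your positivity argument for $\tilde\beta$ is the same as the paper's.

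Your second paragraph asserts that the combination yields $1/(2n^2)$, but your third paragraph correctly retracts this: with $q=\lceil 16n^3\rceil$ one only gets $8n^2/q\le 1/(2n)$. This is not a defect in your reasoning but an inconsistency in the paper's constants---its own final step ``$8n^2/q\le 1/(2n^2)$'' requires $q\ge 16n^4$, not $16n^3$. The remedy is simply to set $q:=\lceil 16n^4\rceil$; this is harmless for the subsequent encoding-size argument.

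The cancellation you propose cannot recover the missing factor and should be abandoned. The rounding errors $\mu_j=[\lambda_j]_q-\lambda_j$ are arbitrary numbers in $[-1/(2q),1/(2q)]$ with no structural tie to the $\lambda_j$ beyond vanishing when $\lambda_j=0$. Hence the identity $\sum_j\lambda_j\bigl(\scalar{\alpha^j,z-r^0}_D-\beta_j\bigr)=0$ places no constraint whatsoever on $\sum_j\mu_j\bigl(\scalar{\alpha^j,z-r^0}_D-\beta_j\bigr)$; each term $\scalar{\alpha^j,z-r^0}_D-\beta_j$ can individually be of order $\beta$, so the triangle-inequality bound is already sharp up to constants.
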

\begin{proof}
We first show that $\|\alpha-\gamma\|_D\leq \frac{n}{2q}$ and $|\beta-\delta|\leq \frac{n\beta}{2q}$. Indeed,
$\|\alpha-\gamma\|_D^2=\sum_{j,h=1}^n(\lambda_j-[\lambda_j]_q)(\lambda_h-[\lambda_h]_q)\scalar{\alpha^j,\alpha^h}_D\leq \frac{n^2}{4q^2}$, because $\scalar{\alpha^j,\alpha^h}_D\leq 1$ for $j,h=1,\ldots,n$. Also,
$|\beta-\delta|= |\sum_{j=1}^n (\lambda_j-[\lambda_j]_q)\beta^j|\leq \frac{n\beta}{2q}$, because by Remark \ref{rmk:postive lambdas} $|\beta^j|\le\beta$ whenever $\lambda_j>0$.  Note that $\delta\ge \beta\left(1-\frac{n}{2q}\right)>0$, thus $\tilde \beta>0$, proving the second claim.

We assume that $\|z\|_D\geq\|\tilde z\|_D$, otherwise the first claim is trivial. Note that $\|z\|_D-\|\tilde z\|_D=\beta-\frac{\delta}{\|\gamma\|_D}\leq |\beta-\delta|+\frac{\delta}{\|\gamma\|_D}|\|\gamma\|_D-1|\leq \frac{\beta n}{q}$, where the last inequality follows from $|\beta-\delta|\leq \frac{n\beta}{2q}$, $\tilde \beta\leq \beta$ and from $|\|\gamma\|_D-1|=|\|\gamma\|_D-\|\alpha\|_D|\leq \|\alpha-\gamma\|_D\leq \frac{n}{2q}$.
Finally $\|z\|^2_D-\|\tilde z\|^2_D=(\|z\|_D-\|\tilde z\|_D)(\|z\|_D+\|\tilde z\|_D)\leq \frac{2\beta\|z\|_D n}{q}\leq \frac{8n^2}{q}\leq \frac 1 {2n^2}$. The second inequality follows since $\beta\le \|z\|_D\le 2\sqrt{n}$ by the termination criterion of the \Basic{}.
\end{proof}

We claim that the encoding size of the $\tilde \lambda_j$'s remains polynomially bounded. Note that, if at every iteration we guarantee that $\lambda_j$ ($j=1,\ldots,n$) is bounded from above by a number of polynomial size, then the encoding sizes of $[\lambda_j]_q$ is polynomial as well, and therefore so is the encoding size of $\tilde \lambda$.

Let $z$ and $\lambda$ satisfy (\ref{def:lambda}), and let
$z'$ denote the next point, with $\lambda'$ defined by (\ref{def:lambda'}); note that
$z'$ and $\lambda'$ also satisfy (\ref{def:lambda}).

\begin{claim} If $\|z'\|_D\leq 2\sqrt{n}$, then  $\lambda'_j\leq 8n^3(\lambda_j+1)$ for $j=1,\ldots,n$.
\end{claim}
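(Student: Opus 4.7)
The plan is to reduce the coordinatewise bound on $\lambda'$ to two scalar bounds on $\mu_1$ and $\mu_2$, then control those using the identities that relate $\mu_1, \mu_2, \beta, \beta^i$ and $\beta'$.

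First, directly from (\ref{def:lambda'}) we read off $\lambda'_j = \mu_2 \lambda_j/\beta'$ for $j \neq i$ and $\lambda'_i = (\mu_1 + \mu_2 \lambda_i)/\beta'$. Therefore it suffices to prove $\mu_1 \leq 8n^3 \beta'$ and $\mu_2 \leq 8n^3 \beta'$: for $j \neq i$ this yields $\lambda'_j \leq 8n^3 \lambda_j \leq 8n^3(\lambda_j+1)$, and for $j = i$, $\lambda'_i = \mu_1/\beta' + (\mu_2/\beta')\lambda_i \leq 8n^3 + 8n^3\lambda_i = 8n^3(\lambda_i + 1)$.

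Next, recall from the discussion preceding the claim that $\mu_1, \mu_2 \geq 0$ and satisfy $\mu_1 + c\mu_2 = \beta^i$ and $c\mu_1 + \mu_2 = \beta$, where $c = \scalar{\alpha^i, \alpha}_D \in (-1,1)$. Expanding $\beta'^2 = \|\mu_1 \alpha^i + \mu_2 \alpha\|_D^2$ gives $\beta'^2 = \mu_1^2 + 2c\mu_1\mu_2 + \mu_2^2$, and then subtracting $\beta^2 = (c\mu_1+\mu_2)^2$ or $(\beta^i)^2 = (\mu_1+c\mu_2)^2$ produces the two useful identities
$$\mu_1^2(1-c^2) = \beta'^2 - \beta^2, \qquad \mu_2^2(1-c^2) = \beta'^2 - (\beta^i)^2.$$
In particular $\beta \leq \beta'$ and $|\beta^i| \leq \beta'$, and from Remark~\ref{rmk:lambda}, $\beta'^2 = \mu_1\beta^i + \mu_2\beta$.

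Finally, to turn these into the bounds $\mu_1, \mu_2 \leq 8n^3\beta'$, I would use $\beta' \geq 1/n$ (which holds throughout the \Basic{}: applied once at the outset via Lemma~\ref{lemma:p(z)} and Remark~\ref{rmk:distance from origin}, then monotone) together with the hypothesis $\beta' \leq 2\sqrt n$. When $\beta^i \geq 0$, the identity $\beta'^2 = \mu_1\beta^i + \mu_2\beta$ gives non-negative summands, so $\mu_2 \beta \leq \beta'^2$ and hence $\mu_2/\beta' \leq \beta'/\beta \leq 2n^{3/2} \leq 8n^3$, with a symmetric bound on $\mu_1$ when $\beta^i$ is not too close to $0$. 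The main obstacle I expect is the residual case when $\beta^i$ is very small or negative, where one must fall back on $\mu_1 \leq \beta'/\sqrt{1-c^2}$ (from the first identity above) and combine it with the progress estimate $\mu_1^2(1-c^2) \geq 1/n^2$ from Lemma~\ref{lemma:p(z)} and $\beta' \leq 2\sqrt n$ in order to extract a polynomial lower bound on $1-c^2$; carrying out this last step cleanly, so that the constant comes out to $8n^3$, is what I expect will take the most care.
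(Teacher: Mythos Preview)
Your reduction to the bounds $\mu_1/\beta'\le 8n^3$ and $\mu_2/\beta'\le 8n^3$ is correct, and so are the identities $\mu_1^2(1-c^2)=\beta'^2-\beta^2$ and $\mu_2^2(1-c^2)=\beta'^2-(\beta^i)^2$. The gap is in your final step. The three ingredients you propose to combine---$\mu_1\le\beta'/\sqrt{1-c^2}$, $\mu_1^2(1-c^2)\ge 1/n^2$, and $\beta'\le 2\sqrt n$---do \emph{not} yield a lower bound on $1-c^2$: together they only say $1/n^2\le\mu_1^2(1-c^2)\le\beta'^2\le 4n$, which constrains the product $\mu_1^2(1-c^2)$ but not the two factors separately. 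Without independent control on $\mu_1$ (which is exactly what you are trying to prove) this is circular, and the case split on the sign or size of $\beta^i$ does not rescue it.

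The paper avoids this by using a different identity: a short calculation (or directly from~(\ref{eq:mu})) gives
\[
\beta'^2(1-c^2)=\beta^2+(\beta^i)^2-2\beta\beta^i c=\|z-r^i\|_D^2.
\]
Now the geometric input is that $z$ lies at $D$-distance greater than $1/n$ from the hyperplane $\scalar{x_i=\ell_i}$ (this is the content of the proof of Lemma~\ref{lemma:p(z)}), and since $r^i$ lies on that hyperplane, $\|z-r^i\|_D\ge 1/n$. Combined with $\beta'\le 2\sqrt n$ this gives $1-c^2\ge 1/(4n^3)$ directly, with no case analysis. From there the explicit formulas~(\ref{eq:mu}), together with $|\beta_i|\le\beta$ (Remark~\ref{rmk:postive lambdas}) and $|c|\le 1$, give $\mu_1,\mu_2\le 2\beta/(1-c^2)\le 8n^3\beta$, and dividing by $\beta'>\beta$ finishes the claim. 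So the missing idea is precisely this third identity relating $1-c^2$ to the distance $\|z-r^i\|_D$; once you have it, the argument is uniform and the constant $8n^3$ falls out without effort.
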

\begin{proof}
Let $\mu_1,\mu_2\geq 0$ be defined as in (\ref{eq:mu}); recall that
these satisfy $z'-r^0=\mu_1\alpha^i+\mu_2\alpha$ and
$\|z'-r^0\|_D^2=\mu_1\beta_i+\mu_2\beta$.
We first show that
$\mu_1,\mu_2\leq 8n^3\beta$. It follows from (\ref{eq:mu}) that
\begin{eqnarray*}
\|z'-r^0\|^2_D&=&\frac{\beta_i^2+\beta^2-2\beta\beta^i\scalar{\alpha,\alpha^i}_D}{1-\scalar{\alpha^i,\alpha}^2_D}=\\
&=&\frac{\|r^i-r^0\|^2_D+\|z-r^0\|^2_D-2\scalar{r^i-r^0,z-r^0}_D}{1-\scalar{\alpha^i,\alpha}^2_D}=\frac{\|z-r^i\|^2_D}{1-\scalar{\alpha^i,\alpha}^2_D}.
\end{eqnarray*}
In the second equality we use $r^i-r^0=\alpha^i\beta^i$,
$z-r^0=\alpha\beta$, and $\|\alpha^i\|_D=\|\alpha\|_D=1$.
Since $z$ has distance at least $1/n$ from the hyperplane
$\scalar{\scalar{r^i,x}_D=\|r^i\|^2_D}$, it follows that
$\|z-r^i\|_D\geq 1/n$. Since $\|z'-r^0\|_D\leq 2\sqrt n$, we obtain
$1-\scalar{\alpha^i,\alpha}^2_D\geq 1/4n^3$. Further, by
Remark~\ref{rmk:postive lambdas} we have $|\beta_i|\leq\beta$. These
together with  (\ref{eq:mu}) and $|\scalar{\alpha^i,\alpha}_D|\leq 1$, imply
$\mu_1,\mu_2\leq 8n^3\beta$.

Since (\ref{def:lambda'}) defines $\lambda'=(\mu_1 e^i+\mu_2\lambda)/\|z'-r^0\|$, using that $\|z'-r^0\|>\beta$, it follows that $\lambda'_j\leq 8n^3(\lambda_j+1)$ for $j=1,\ldots,n$.
\end{proof}

Since at the first iteration $\lambda_j\leq 1$, $j=1,\ldots,n$, it
follows from the above claim that after $k$ iterations of the
\Basic{}, we have  $\lambda_j\leq k(8n^3)^{k}$. 
As argued above, the rounded \Basic{} terminates in at most $8n^3$ iterations, therefore  $\lambda_j\in O(n^3 (8n^3)^{8n^3})$ in all iterations.
Consequently, the $\lambda_j$'s encoding sizes are polynomially bounded.

Since every iteration of the \Basic{}can be carried out in $O(n)$ arithmetic operations and the encoding size of the numbers remains polynomially bound it follows that  the \Basic{} is strongly polynomial, with running time $O(n^4)$.

\section{Improving the running time by a $1/\log(n)$ factor}
\label{sec:running time refinement}
In this section, we show that the total number of calls of the \Basic{} can be bounded by
 $O\left(\frac{n}{\log n}L\right)$. This will be achieved through an amortized runtime analysis by means of a potential.
For simplicity, we first present the analysis for the version where the updated vector $u$ is not rounded, and then explain the necessary modifications when rounding
is used.

A main event in the LP algorithm is when, for some coordinate $j$, we obtain $u_j<\Delta^{-1}$ and therefore we may conclude $x_j=0$.
This reduces the number of variables by one.
The algorithm terminates once the system $Ax=b$ has a unique solution
or is infeasible;
assume this happens after eliminating $f\le n-1$ variables.
For simplicity of notation, let us assume that the variables are set to zero in the order $x_n,x_{n-1},\ldots,x_{n-f+1}$, breaking ties arbitrarily.
For $k=2,\ldots,f$, the {\em $k$-th phase} of the algorithm starts with the iteration after the one when $x_{n+2-k}$ is set to zero and terminates when $x_{n+1-k}$ is set to zero; the first phase consists of all iterations until $x_n$ is set to 0.
A phase can be empty if multiple variables are set to 0 simultaneously. In the $k$-th phase there are $n-k+1$ variables in the problem.
We analyze the potential
$$
\Psi:=\sum_{j=1}^n \log\max\left\{u_j,\frac1{\Delta}\right\}.
$$
Note that the initial value of $\Psi$ is $n\log \Delta$, and it is monotone decreasing during the entire algorithm.
Let $p_k$ denote the decrease in the potential value in phase $k$. Since $u_j$ decreases from $\Delta$ to at most $\frac1{\Delta}$ for every $j=n+1-k,\ldots,n$ in the first $k$ phases, we have that the value of $\Psi$ at the end of the $k$-th phase is at most $(n-2k)\log\Delta$, or equivalently,
\begin{equation}\label{eq:sum-p}
\sum_{i=1}^k p_i\ge 2k\log\Delta.
\end{equation}

\begin{claim}\label{claim:psi-dec}
In the $k$-th phase of the algorithm, $\Psi$ decreases  by at least
 $\log(n-k+2)$ in every iteration, with the possible exception of the last one.
\end{claim}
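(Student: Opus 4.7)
My plan is first to unpack what a single non-terminating iteration of the LP algorithm does to the potential $\Psi$, and then to reduce the required bound to a concrete algebraic inequality about vectors of positive reals. Let $m := n - k + 1$ denote the number of active variables during phase $k$; without loss of generality the active variables are indexed $1, \ldots, m$. The call to the \Basic{} in this phase invokes Theorem~\ref{thm:basic algorithm} with $n$ replaced by $m$, and after rescaling $(v,w)$ as in Claim~\ref{claim:u'} so that $\sum_{j=1}^{m} u_j w_j = 2m$, the update rule becomes $u'_j = \min\{u_j, w_j^{-1}\}$. Writing $t_j := u_j w_j$, let $S := \{j : t_j > 1\}$; this is exactly the set of coordinates on which the upper bound strictly decreases.

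Next, since the iteration under consideration is not the last of phase $k$, no active coordinate crosses the threshold $1/\Delta$, so both $u_j$ and $u'_j$ exceed $1/\Delta$ for every active $j$. Hence
\[
\Delta\Psi \;=\; \sum_{j=1}^{m}(\log u_j - \log u'_j) \;=\; \sum_{j \in S} \log t_j,
\]
since coordinates outside $S$ contribute zero. To lower-bound the remaining sum I would use that $t_j \leq 1$ for $j \notin S$ together with $\sum_{j=1}^{m} t_j = 2m$, which gives
\[
\sum_{j \in S} t_j \;\geq\; 2m - (m - |S|) \;=\; m + |S|.
\]
Substituting $t_j = 1 + x_j$ with $x_j > 0$ for $j \in S$, this becomes $\sum_{j \in S} x_j \geq m$, and the Weierstrass-type inequality $\prod_{j \in S}(1 + x_j) \geq 1 + \sum_{j \in S} x_j$ (immediate from expanding the product and discarding the nonnegative cross terms) yields $\prod_{j \in S} t_j \geq m + 1$. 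Taking logarithms gives $\Delta\Psi \geq \log(m + 1) = \log(n - k + 2)$, as required.

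The only nontrivial step is the final algebraic estimate. A bound through the maximum entry $t_{j^\star} \geq (m + |S|)/|S|$ only delivers $\log(1 + m/|S|)$, which collapses to $\log 2$ when $|S|$ is large, and Jensen's inequality runs the wrong way for the concave function $\log$. The key idea is to pass from the sum of logarithms to the logarithm of a product and to exploit the strict inequality $t_j > 1$ on $S$: this converts the linear mass bound $\sum_{j \in S} x_j \geq m$ into a multiplicative bound on $\prod_{j \in S} t_j$ that is independent of $|S|$ and matches the target $\log(n - k + 2)$ precisely.
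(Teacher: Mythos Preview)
Your proof is correct and follows essentially the same route as the paper's: both normalize so that $\sum_j u_jw_j = 2m$, rewrite the potential drop as $\log\prod_j \max\{1,u_jw_j\}$, and bound that product below by $m+1$. The only difference is in how the last inequality is justified: the paper asserts without proof that $\min\{\prod_j \max\{1,\alpha_j\} : \sum_j \alpha_j = 2t,\ \alpha\in\R^t_+\} = t+1$, whereas you supply a clean explicit argument via the Weierstrass product inequality $\prod_{j\in S}(1+x_j)\ge 1+\sum_{j\in S}x_j$, which is arguably more self-contained.
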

\begin{proof} Note that in the $k$-th phase the values $u_n,u_{n-1},\ldots,u_{n-k+2}$ do not change anymore.
Recall from Claim~\ref{claim:u'} that, for $j=1,\ldots,n-k+1$, the new value of $u_j$ is set as  $u'_j:=\min\left\{u_j,w_j^{-1}\right\}$, where $(v,w)$ is a vector as in Theorem~\ref{thm:basic algorithm} normalized to $\sum_{i=1}^{n-k+1} u_iw_i=2(n-k+1)$, since $n-k+1$ is the number of variables not yet fixed to $0$.
In particular, $u_j/u'_j=\max\{1,u_jw_j\}$ for  $j=1,\ldots,n-k+1$.
The new value of the potential is $\Psi'=\sum_{j=1}^n \log\min\left\{u'_j,\frac1{\Delta}\right\}$. In every iteration of the $k$-th phase except for the last one we must have $u_j\ge u'_j>\frac1{\Delta}$ for every $j=1,\ldots,n-k+1$, hence
\begin{equation}\label{eq:potential drop}
{\Psi}-{\Psi'}= \sum_{j=1}^{n-k+1}\log\frac{u_j}{u'_j}=\log\prod_{j=1}^{n-k+1} \max\{1,u_jw_j\}\geq \log(n-k+2).
\end{equation}
The last inequality follows from the fact that, for any positive integer $t$, we have $$\min\left\{\prod_{j=1}^{t} \max\{1,\alpha_j\}\st \sum_{j=1}^{t} \alpha_j=2t,\, \alpha\in\R^t_+\right\}=t+1,$$ the minimum being achieved when $\alpha_j=t+1$ for a single value of $j$ and $\alpha_j=1$ for all other values.
\end{proof}
Let $r_k$ denote the number of iterations in phase $k$. The claim implies the following upper  bound:
\begin{equation}\label{eq:r-bound}
r_k\le \frac{p_k}{\log (n-k+2)}+1
\end{equation}

Together with (\ref{eq:sum-p}), it follows that the total number of iterations $\sum_{k=1}^f r_k$ is bounded by the  optimum of the following LP
\begin{equation}\label{eq:LP potential}\begin{array}{rl}
\max\ & (n-1)+\sum_{i=1}^{n} \frac{1}{\log (n-i+2)} p_i\\
\sum_{i=1}^k p_i&\ge 2k\log\Delta\quad  k=1,\ldots,n-1\\
\sum_{i=1}^n p_i&\le 2n\log\Delta\\
p&\in\R^{n}_+
\end{array}\end{equation}
It is straightforward that the optimum solution is $p_i=2\log\Delta$ for $i=1,\ldots,n$. One concludes that the number of iterations is at most
$$
(n-1)+2\log\Delta \sum_{i=1}^{n}  \frac{1}{\log (n-i+2)} \leq (n-1)+ 2\log\Delta\int_{1}^{n+1}\frac{dt}{\log t},$$
where the inequality holds because the function $\frac{1}{\log x}$ is decreasing.
The function  $\mathrm{li}(x):=\int_{0}^{x}\frac{dt}{\ln t}$ (defined for $x>1$) is the {\em logarithmic integral} \cite{Abramowitz-Stegun}, and it is known that $\mathrm{li}(x)=O\left(\frac{x}{\log x}\right)$. This gives the bound $O\left(\frac{n}{\log n}L\right)$ on the total number of iterations,
using that $\Delta\le 2^L$.

Let us now turn to the version of the algorithm where $u'_j$ is rounded up to
$\tilde u_j$, an integer multiple of $1/(3n\Delta)$. In all but the
last iteration of the $k$'th phase $u'_j\ge
1/\Delta$ holds, and therefore $\frac{u_j}{\tilde u_j}\ge \frac{u_j}{u_j'}\cdot
\frac{1}{1+1/(3n)}$. Hence, from (\ref{eq:potential drop}), in the $k$'th phase we have $\Psi-\Psi'\ge
\log(n-k+2)-(n-k+1)\log \frac{3n+1}{3n}> \frac12\log(n-k+2)$.
This ensures at least half of the drop in potential guaranteed in Claim~\ref{claim:psi-dec}, giving the same asymptotic running time bound.

\end{document}